%------------------------------------------------------------------------------
% Beginning of journal.tex
%------------------------------------------------------------------------------
%
% AMS-LaTeX version 2 sample file for journals, based on amsart.cls.
%
%        ***     DO NOT USE THIS FILE AS A STARTER.      ***
%        ***  USE THE JOURNAL-SPECIFIC *.TEMPLATE FILE.  ***
%
% Replace amsart by the documentclass for the target journal, e.g., tran-l.
%

\documentclass[reqno,a4paper, 11pt]{amsart}

\topmargin 0in \textheight 8.5in \textwidth 6.5in \oddsidemargin
-0.105in \evensidemargin -0.105in

\usepackage{pictexwd,dcpic,amssymb}
\usepackage{graphicx}
\usepackage{array}
\usepackage{multirow}

\newtheorem{theorem}{Theorem}[section]
\newtheorem{proposition}[theorem]{Proposition}
\newtheorem{lemma}[theorem]{Lemma}

\theoremstyle{definition}

\newtheorem{example}[theorem]{Example}

\theoremstyle{remark}
\newtheorem{remark}[theorem]{Remark}

\numberwithin{equation}{section}

%    Absolute value notation

%    Blank box placeholder for figures (to avoid requiring any
%    particular graphics capabilities for printing this document).

\begin{document}

\title{Normal bases of ray class fields over imaginary quadratic fields}

\author{Ho Yun Jung}
%    Address of record for the research reported here
\address{Department of Mathematical Sciences, KAIST}
%    Current address
\curraddr{Daejeon 373-1, Korea} \email{DOSAL@kaist.ac.kr}
%    \thanks will become a 1st page footnote.
\thanks{}

\author{Ja Kyung Koo}
%    Address of record for the research reported here
\address{Department of Mathematical Sciences, KAIST}
%    Current address
\curraddr{Daejeon 373-1, Korea} \email{jkkoo@math.kaist.ac.kr}
%    \thanks will become a 1st page footnote.
\thanks{}

\author{Dong Hwa Shin}
%    Address of record for the research reported here
\address{Department of Mathematical Sciences, KAIST}
%    Current address
\curraddr{Daejeon 373-1, Korea} \email{shakur01@kaist.ac.kr}
%    \thanks will become a 1st page footnote.
\thanks{}
%    General info

\subjclass[2010]{11F11, 11F20, 11R37, 11Y40}

\keywords{class fields, modular functions, normal bases.
\newline This research was partially supported by Basic Science Research
Program through the NRF of Korea funded by MEST (2010-0001654). The
third named author is partially supported by TJ Park Postdoctoral
Fellowship.}

\maketitle

\begin{abstract}
We develop a criterion for a normal basis (Theorem \ref{criterion}),
and prove that the singular values of certain Siegel functions form
normal bases of ray class fields over imaginary quadratic fields
other than $\mathbb{Q}(\sqrt{-1})$ and $\mathbb{Q}(\sqrt{-3})$
(Theorem \ref{main}). This result would be an answer for the
Lang-Schertz conjecture on a ray class field with modulus generated
by an integer ($\geq2$) (Remark \ref{S-Rinvariant}).
\end{abstract}

\section{Introduction}

Let $L$ be a finite Galois extension of a field $K$. From the normal
basis theorem (\cite{Waerden}) we know that there exists a normal
basis of $L$ over $K$, namely, a basis of the form
$\{x^\gamma:\gamma\in\mathrm{Gal}(L/K)\}$ for a single element $x\in
L$.
\par
Okada (\cite{Okada1}) showed that if $k$ and $q$ ($>2$) are positive
integers with $k$ odd and $T$ is a set of representatives for which
$(\mathbb{Z}/q\mathbb{Z})^\times=T\cup(-T)$, then the real numbers
$(\frac{1}{\pi}\frac{d}{dz})^k(\cot \pi z)|_{z=a/q}$ for $a\in T$
form a normal basis of the maximal real subfield of
$\mathbb{Q}(e^{2\pi i/q})$ over $\mathbb{Q}$. Replacing the
cotangent function by the Weierstrass $\wp$-function with
fundamental period $i$ and $1$, he further obtained in \cite{Okada2}
normal bases of class fields over Gauss' number field
$\mathbb{Q}(\sqrt{-1})$. This result was due to the fact that Gauss'
number field has class number one, which can be naturally extended
to any imaginary quadratic field of class number one.
\par
After Okada, Taylor (\cite{Taylor}) and Schertz (\cite{Schertz2})
established Galois module structures of rings of integers of certain
abelian extensions over an imaginary quadratic field, which are
analogues to the cyclotomic case (\cite{Leopoldt}). They also found
normal bases by making use of special values of modular functions.
And, Komatsu (\cite{Komatsu}) considered certain abelian extensions
$L$ and $K$ of $\mathbb{Q}(e^{2\pi i/5})$ and constructed a normal
basis of $L$ over $K$ in terms of special values of Siegel modular
functions.
\par
For any pair $(r_1,r_2)\in\mathbb{Q}^2-\mathbb{Z}^2$ we define the
\textit{Siegel function} $g_{(r_1,r_2)}(\tau)$ on $\mathfrak{H}$
(the complex upper half-plane) by the following infinite product
\begin{eqnarray*}
g_{(r_1,r_2)}(\tau)=-q_\tau^{(1/2)\textbf{B}_2(r_1)}e^{\pi
ir_2(r_1-1)}(1-q_z)\prod_{n=1}^{\infty}(1-q_\tau^nq_z)(1-q_\tau^nq_z^{-1}),
\end{eqnarray*}
where $\textbf{B}_2(X)=X^2-X+1/6$ is the second Bernoulli
polynomial, $q_\tau=e^{2\pi i\tau}$ and $q_z=e^{2\pi iz}$ with
$z=r_1\tau+r_2$. It is a modular unit in the sense of \cite{K-L}.
\par
Let $K$ ($\neq\mathbb{Q}(\sqrt{-1}),~\mathbb{Q}(\sqrt{-3})$) be an
imaginary quadratic field of discriminant $d_K$ and
$\mathcal{O}_K=\mathbb{Z}[\theta]$ be its ring of integers with
\begin{eqnarray}\label{theta}
\theta=\left\{\begin{array}{ll}\sqrt{d_K}/2&\textrm{for}~d_K\equiv0\pmod{4}\vspace{0.1cm}\\
(-1+\sqrt{d_K})/2&\textrm{for}~
d_K\equiv1\pmod{4}.\end{array}\right.
\end{eqnarray}
In what follows we denote the Hilbert class field and the ray class
field modulo $N\mathcal{O}_K$ for an integer $N$ ($\geq2$) by $H$
and $K_{(N)}$, respectively. We showed in \cite{J-K-S} that the
singular value $x=g_{(0,1/N)}(\theta)^{-12N/\gcd(6,N)}$ is a
primitive generator of $K_{(N)}$ over $K$. We achieved this result
by showing that the absolute value of $x$ is the smallest one among
those of all the conjugates.
\par
In this paper we will show that the conjugates of a high power of
$x$ form a normal basis of $K_{(N)}$ over $K$ by applying a
criterion for a normal basis (Theorems \ref{criterion} and
\ref{main}). As for the action of $\mathrm{Gal}(K_{(N)}/K)$ on $x$
in the process we adopt the idea of Gee-Stevenhagen (\cite{Gee},
\cite{Stevenhagen}). Our result is also related to the Lang-Schertz
conjecture on the Siegel-Ramachandra invariant to construct ray
class fields $K_{(N)}$ (Remark \ref{S-Rinvariant}).

\section{A criterion for a normal basis}

In this section we let $L$ be a finite abelian extension of a number
field $K$ with
$G=\mathrm{Gal}(L/K)=\{\gamma_1=\mathrm{id},\cdots,\gamma_n\}$.
Furthermore, we denote by $|\cdot|$ the usual absolute value defined
on $\mathbb{C}$.

\begin{lemma}\label{det}
A set of elements $\{x_1,\cdots,x_n\}$ in $L$ is a $K$-basis of $L$
if and only if
\begin{equation*}
\det(x_i^{\gamma_j^{-1}})_{1\leq i,j\leq n}\neq0.
\end{equation*}
\end{lemma}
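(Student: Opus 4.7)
The plan is to derive both implications from Dedekind's theorem on the linear independence of automorphisms, combined with the straightforward passage between a $K$-linear relation among $x_1,\dots,x_n$ and a relation among the rows or columns of the matrix $M = (x_i^{\gamma_j^{-1}})$.

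For the direction ``$\det M \neq 0$ implies $\{x_1,\dots,x_n\}$ is a $K$-basis,'' I would start with an arbitrary $K$-linear relation $\sum_i c_i x_i = 0$ with $c_i \in K$. Applying each $\gamma_j^{-1}$ and using that the $c_i$ lie in $K$ and are therefore fixed, one obtains the row-vector equation $(c_1,\dots,c_n)\,M = 0$. Invertibility of $M$ then forces $c_i = 0$ for all $i$, and since $[L:K]=n$, the $x_i$ are automatically a $K$-basis.

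For the converse, I would argue by contrapositive: suppose $\{x_1,\dots,x_n\}$ is a $K$-basis yet $\det M = 0$. Then the columns of $M$ are $L$-linearly dependent, so there exist $a_1,\dots,a_n \in L$, not all zero, with $\sum_j a_j\,x_i^{\gamma_j^{-1}} = 0$ for every $i$. Because every $y \in L$ can be written as $\sum_i b_i x_i$ with $b_i \in K$, and each $\gamma_j^{-1}$ is $K$-linear, this identity upgrades to $\sum_j a_j\,y^{\gamma_j^{-1}} = 0$ for every $y \in L$. That is a nontrivial $L$-linear relation among the distinct automorphisms $\gamma_1^{-1},\dots,\gamma_n^{-1}$, contradicting Dedekind's theorem on the linear independence of characters.

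There is no serious obstacle here: the only external ingredient is Dedekind's linear independence of characters, and the rest is routine linear algebra. The point requiring mild care is keeping track of which relation (rows versus columns, scalars in $K$ versus in $L$) is needed for each direction of the equivalence.
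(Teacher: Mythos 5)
Your proof is correct and complete in both directions; the paper itself dismisses this lemma with the single word ``Straightforward,'' so your argument via Dedekind's linear independence of characters (together with the row/column bookkeeping you describe) is exactly the standard filling-in of that omitted proof. No discrepancy with the paper's approach, since the paper offers none to compare against.
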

\begin{proof}
Straightforward.
\end{proof}

By $\widehat{G}$ we mean the character group of $G$.

\begin{lemma}[Frobenius determinant relation]\label{Frobenius}
If $f$ is any $\mathbb{C}$-valued function on $G$, then
\begin{equation*}
\prod_{\chi\in\widehat{G}} \sum_{1\leq i\leq
n}{\chi}(\gamma_i^{-1})f(\gamma_i)=\det(f(\gamma_i
\gamma_j^{-1}))_{1\leq i,j\leq n}.
\end{equation*}
\end{lemma}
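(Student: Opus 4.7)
The plan is to realize the matrix $M=(f(\gamma_i\gamma_j^{-1}))_{1\leq i,j\leq n}$ as the matrix of a concrete linear operator on the group algebra $\mathbb{C}[G]$, diagonalize that operator using the characters of $G$, and read off the determinant as the product of the resulting eigenvalues.

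More precisely, let $\alpha=\sum_{i=1}^n f(\gamma_i)\gamma_i\in\mathbb{C}[G]$ and let $T_\alpha$ denote left multiplication by $\alpha$ on $\mathbb{C}[G]$. Expanding $T_\alpha(\gamma_j)=\sum_\ell f(\gamma_\ell)\gamma_\ell\gamma_j$ and collecting the coefficient of $\gamma_i$ (so that $\gamma_\ell=\gamma_i\gamma_j^{-1}$) shows that the matrix of $T_\alpha$ with respect to the ordered basis $\{\gamma_1,\dots,\gamma_n\}$ is precisely $M$. Therefore
\begin{equation*}
\det(f(\gamma_i\gamma_j^{-1}))_{1\leq i,j\leq n}=\det T_\alpha.
\end{equation*}

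Next I would diagonalize $T_\alpha$. Because $G$ is abelian, the orthogonality relations for characters give that the elements $e_\chi=\sum_{i=1}^n\chi(\gamma_i)\gamma_i$, for $\chi\in\widehat{G}$, form a $\mathbb{C}$-basis of $\mathbb{C}[G]$, and each is an eigenvector for every $T_\beta$ with $\beta\in\mathbb{C}[G]$. A direct computation gives
\begin{equation*}
\alpha\cdot e_\chi=\sum_{i,k}f(\gamma_i)\chi(\gamma_k)\gamma_i\gamma_k=\sum_{m}\Bigl(\sum_i f(\gamma_i)\chi(\gamma_i^{-1}\gamma_m)\Bigr)\gamma_m=\lambda_\chi e_\chi,
\end{equation*}
where $\lambda_\chi=\sum_{i=1}^n\chi(\gamma_i^{-1})f(\gamma_i)$ after pulling out $\chi(\gamma_m)$ and using the multiplicativity of $\chi$. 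Since the $e_\chi$ constitute a basis of eigenvectors, the determinant of $T_\alpha$ is the product of these eigenvalues, yielding the claimed identity.

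There is no real obstacle: the only things to check carefully are the identification $M=[T_\alpha]$ (getting the inverse on $\gamma_j$ in the right place) and the eigenvalue computation (distinguishing $\chi(\gamma_i)$ from $\chi(\gamma_i^{-1})$ so that the character factor matches the statement exactly). Both are bookkeeping; the substantive inputs are only the regular representation of $\mathbb{C}[G]$ and the fact that an abelian group algebra over $\mathbb{C}$ splits as a sum of one-dimensional character eigenspaces.
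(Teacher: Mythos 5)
Your proof is correct and complete: the identification of $(f(\gamma_i\gamma_j^{-1}))$ as the matrix of left multiplication by $\alpha=\sum_i f(\gamma_i)\gamma_i$ in the regular representation, and the diagonalization over the character eigenvectors $e_\chi$ with eigenvalues $\lambda_\chi=\sum_i\chi(\gamma_i^{-1})f(\gamma_i)$, together give exactly the stated product formula. The paper itself offers no argument here --- it only cites Lang, Chapter 21, Theorem 5 --- and your argument is essentially the classical proof found there, so there is nothing to add beyond noting that the linear independence of the $e_\chi$ (via orthogonality of characters, using that $\#\widehat{G}=\#G=n$ since $G$ is abelian) is the one point that must be invoked, and you do invoke it.
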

\begin{proof}
See \cite{Lang} Chapter 21 Theorem 5.
\end{proof}

Combining Lemmas \ref{det} and \ref{Frobenius} we derive the
following proposition.

\begin{proposition}\label{character}
The conjugates of an element $x\in L$ form a normal basis of $L$
over $K$ if and only if
\begin{equation*}
\sum_{1\leq i\leq n }{\chi}(\gamma_i^{-1})x^{\gamma_i}\neq0
\quad\textrm{for all}~\chi\in\widehat{G}.
\end{equation*}
\end{proposition}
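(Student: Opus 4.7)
The plan is to obtain Proposition \ref{character} as an immediate consequence of Lemmas \ref{det} and \ref{Frobenius}, with essentially no additional input required. By definition, the conjugates of $x$ form a normal basis of $L/K$ precisely when the set $\{x^{\gamma_1},\dots,x^{\gamma_n}\}$ is a $K$-basis of $L$, so Lemma \ref{det} reduces the problem to analyzing when
\[
\det\bigl((x^{\gamma_i})^{\gamma_j^{-1}}\bigr)_{1\le i,j\le n}\neq 0.
\]

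The key step is to rewrite the matrix entries so that Lemma \ref{Frobenius} applies. Using the standard convention $(x^{\sigma})^{\tau}=x^{\sigma\tau}$ together with the fact that $G$ is abelian, the $(i,j)$-entry simplifies to $x^{\gamma_i\gamma_j^{-1}}=f(\gamma_i\gamma_j^{-1})$, where $f\colon G\to\mathbb{C}$ is defined by $f(\gamma)=x^{\gamma}$. This is exactly the shape of matrix that appears on the right-hand side of the Frobenius determinant relation with this particular choice of $f$.

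Substituting into Lemma \ref{Frobenius} then gives
\[
\det\bigl(x^{\gamma_i\gamma_j^{-1}}\bigr)_{1\le i,j\le n}=\prod_{\chi\in\widehat{G}}\sum_{1\le i\le n}\chi(\gamma_i^{-1})\,x^{\gamma_i}.
\]
Since a product of complex numbers is nonzero if and only if every factor is nonzero, the condition from Lemma \ref{det} is equivalent to the nonvanishing of each character sum, which is precisely the statement of the proposition.

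There is no genuine obstacle here; the only point requiring care is bookkeeping with the indices, namely ensuring that the matrix $\bigl(x_i^{\gamma_j^{-1}}\bigr)$ produced by Lemma \ref{det} (with $x_i=x^{\gamma_i}$) matches the matrix $\bigl(f(\gamma_i\gamma_j^{-1})\bigr)$ appearing in Lemma \ref{Frobenius}, so that the characters in the resulting product are evaluated at $\gamma_i^{-1}$ rather than $\gamma_i$. This alignment is automatic once the abelian hypothesis on $G$ is used to commute $\gamma_j^{-1}$ past $\gamma_i$.
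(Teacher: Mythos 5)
Your proposal is correct and follows exactly the paper's own route: set $x_i=x^{\gamma_i}$, apply Lemma \ref{det}, and then invoke the Frobenius determinant relation with $f(\gamma_i)=x_i$. The only difference is that you make explicit the bookkeeping step $(x^{\gamma_i})^{\gamma_j^{-1}}=f(\gamma_i\gamma_j^{-1})$, which the paper leaves implicit.
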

\begin{proof}
For an element $x\in L$, set $x_i=x^{\gamma_i}$ ($i=1,\cdots,n$). We
establish that
\begin{eqnarray*}
&&\textrm{the conjugates of $x$ form a normal basis of $L$ over
$K$}\\
&\Longleftrightarrow&\textrm{\{$x_1,\cdots,x_n\}$ is a $K$-basis of
$L$}~\textrm{by the definition of a normal basis}\\
&\Longleftrightarrow& \det(x_i^{\gamma_j^{-1}})_{1\leq i,j\leq
n}\neq0\quad\textrm{by Lemma \ref{det}}\\
&\Longleftrightarrow& \sum_{1\leq i\leq
n}{\chi}(\gamma_i^{-1})x_i\neq0\quad\textrm{for all
$\chi\in\widehat{G}$ by Lemma \ref{Frobenius} with}~f(\gamma_i)=x_i.
\end{eqnarray*}
\end{proof}

Now we present a useful criterion which enables us to determine
whether the conjugates of an element $x\in L$ form a normal basis of
$L$ over $K$.

\begin{theorem}\label{criterion}
Assume that there exists an element $x\in L$ such that
\begin{equation}\label{smaller}
|x^{\gamma_i}/x|<1\quad\textrm{for $1<i\leq n$}.
\end{equation} Then the conjugates of a high power
of $x$ form a normal basis of $L$ over $K$.
\end{theorem}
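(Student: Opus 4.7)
The plan is to apply the character criterion in Proposition \ref{character} to a power $x^m$ for a suitably large positive integer $m$. Writing $\gamma_1 = \mathrm{id}$, I would need to show that for every $\chi \in \widehat{G}$ the sum
\begin{equation*}
S_\chi(m) := \sum_{1 \leq i \leq n} \chi(\gamma_i^{-1})\,(x^{\gamma_i})^m
\end{equation*}
is nonzero once $m$ is chosen large enough, and crucially that a single such $m$ works uniformly in $\chi$.

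The main move would be to factor out $x^m$. Note that $x \neq 0$ (otherwise $x^{\gamma_i}/x$ in (\ref{smaller}) would be undefined), so
\begin{equation*}
S_\chi(m) = x^m\Bigl(1 + \sum_{2 \leq i \leq n}\chi(\gamma_i^{-1})\,(x^{\gamma_i}/x)^m\Bigr),
\end{equation*}
where I have used $\chi(\gamma_1^{-1}) = \chi(\mathrm{id}) = 1$. Setting $r := \max_{1 < i \leq n}|x^{\gamma_i}/x|$, the hypothesis (\ref{smaller}) gives $r < 1$, while the fact that $\chi$ takes values on the unit circle yields $|\chi(\gamma_i^{-1})| = 1$. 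The triangle inequality then produces
\begin{equation*}
\Bigl|\sum_{2 \leq i \leq n}\chi(\gamma_i^{-1})\,(x^{\gamma_i}/x)^m\Bigr| \leq (n-1)\,r^m,
\end{equation*}
and this upper bound is independent of $\chi$. Choosing $m$ large enough so that $(n-1)\,r^m < 1$ therefore makes the parenthesised factor nonzero for every character at once, hence $S_\chi(m) \neq 0$ for all $\chi \in \widehat{G}$. Invoking Proposition \ref{character} on $x^m$ completes the argument.

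I do not anticipate any real obstacle here: the proof is essentially an archimedean dominance argument in which the identity term of the Frobenius sum overwhelms the remaining $n-1$ terms after raising to a high power. The only point that demands attention is the uniformity of the exponent $m$ across $\widehat{G}$, and that uniformity is immediate from $|\chi(\gamma_i^{-1})| = 1$, which decouples the estimate from the choice of character.
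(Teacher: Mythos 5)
Your proposal is correct and follows essentially the same route as the paper: both arguments factor out the identity term $x^m$, use $|\chi(\gamma_i^{-1})|=1$ and the triangle inequality to bound the remaining $n-1$ terms uniformly in $\chi$, choose $m$ so that this bound is less than $1$ (the paper takes $|x^{\gamma_i}/x|^m\leq 1/\#G$, you take $(n-1)r^m<1$, which is the same dominance estimate), and then invoke Proposition \ref{character}. No gap.
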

\begin{proof}
By the hypothesis (\ref{smaller}) we can take a suitably large
integer $m$ such that
\begin{equation}\label{smaller2}
|x^{\gamma_i}/x|^m\leq 1/\#G\quad\textrm{for $1<i\leq n$},
\end{equation}
where $\#G$ is the cardinality of $G$. Then, for any
$\chi\in\widehat{G}$ we derive that
\begin{eqnarray*}
|\sum_{1\leq i\leq n}{\chi}(\gamma_i^{-1})(x^m)^{\gamma_i}| &\geq&
|x^m|(1-\sum_{1<i\leq n}|(x^m)^{\gamma_i}/x^m|)\quad\textrm{by the
triangle inequality}\\
&\geq&|x^m|(1-(1/\#G)(\#G-1))=|x^m|/\#G>0\quad\textrm{by
(\ref{smaller2}).}
\end{eqnarray*}
Therefore the conjugates of $x^m$ form a normal basis of $L$ over
$K$ by Proposition \ref{character}.
\end{proof}

\section{Actions of Galois groups}\label{section3}

We shall investigate an algorithm to get all conjugates of the
singular value of a modular function.
\par
For each positive integer $N$, let $\mathcal{F}_N$ be the field of
modular functions of level $N$ defined over $\mathbb{Q}(\zeta_N)$
with $\zeta_N=e^{2\pi i/N}$. Then, $\mathcal{F}_N$ is a Galois
extension of $\mathcal{F}_1=\mathbb{Q}(j)$ ($j=$the elliptic modular
function) whose Galois group is isomorphic to
$\mathrm{GL}_2(\mathbb{Z}/N\mathbb{Z})/\{\pm1_2\}$ (\cite{Lang} or
\cite{Shimura}).
\par
Throughout this section we let $K$ be an imaginary quadratic field
of discriminant $d_K$ and $\theta$ be as in (\ref{theta}).
\par
Under the properly equivalence relation, primitive positive definite
binary quadratic forms $aX^2+bXY+cY^2$ of discriminant $d_K$
determine a group $\mathrm{C}(d_K)$, called the \textit{form class
group of discriminant $d_K$}. We identify $\mathrm{C}(d_K)$ with the
set of all \textit{reduced quadratic forms}, which are characterized
by the conditions
\begin{equation}\label{reduced}
-a<b\leq a<c\quad\textrm{or}\quad 0\leq b\leq a=c
\end{equation}
together with the discriminant relation
\begin{equation}\label{disc}
b^2-4ac=d_K.
\end{equation}
It is well-known that $\mathrm{C}(d_K)$ is isomorphic to
$\mathrm{Gal}(H/K)$ (\cite{Cox}). For a reduced quadratic form
$Q=aX^2+bXY+cY^2$ in $\mathrm{C}(d_K)$ we define a CM-point
\begin{equation}\label{theta_Q}
\theta_Q=(-b+\sqrt{d_K})/2a.
\end{equation}
Furthermore, we define
$\beta_Q=(\beta_p)_p\in\prod_{p~:~\textrm{primes}}\mathrm{GL}_2(\mathbb{Z}_p)$
as
\begin{itemize}
\item[] Case 1 : $d_K\equiv0\pmod{4}$
\begin{eqnarray}\label{u1}
\beta_p=\left\{\begin{array}{ll}
\begin{pmatrix}a&b/2\\0&1\end{pmatrix}&\textrm{if}~p\nmid a\vspace{0.1cm}\\
\begin{pmatrix}-b/2&-c\\1&0\end{pmatrix}&\textrm{if}~p\mid a~\textrm{and}~p\nmid c\vspace{0.1cm}\\
\begin{pmatrix}-a-b/2&-c-b/2\\1&-1\end{pmatrix}&\textrm{if}~p\mid a~\textrm{and}~p\mid c
\end{array}\right.
\end{eqnarray}
\item[] Case 2 : $d_K\equiv1\pmod{4}$
\begin{eqnarray}\label{u2}
\beta_p=\left\{\begin{array}{ll}
\begin{pmatrix}a&(b-1)/2\\0&1\end{pmatrix}&\textrm{if}~p\nmid a\vspace{0.1cm}\\
\begin{pmatrix}-(b+1)/2&-c\\1&0\end{pmatrix}&\textrm{if}~p\mid a~\textrm{and}~p\nmid c\vspace{0.1cm}\\
\begin{pmatrix}-a-(b+1)/2&-c-(b-1)/2\\1&-1\end{pmatrix}&\textrm{if}~p\mid a~\textrm{and}~p\mid
c.
\end{array}\right.
\end{eqnarray}
\end{itemize}

The next two propositions describe Shimura's reciprocity law
explicitly (\cite{Lang} or \cite{Shimura}).

\begin{proposition}\label{Gal(H/K)}
Let $N$ be a positive integer. If $h\in\mathcal{F}_N$ is defined and
finite at $\theta$ and $Q$ is a reduced quadratic form in
$\mathrm{C}(d_K)$, then the value $h^{\beta_Q}(\theta_Q)$ belongs to
$K_{(N)}$. Here, we note that there exists
$\beta\in\mathrm{GL}_2^+(\mathbb{Q})\cap\mathrm{M}_2(\mathbb{Z})$
such that $\beta\equiv \beta_p\pmod{N\mathbb{Z}_p}$ for all primes
$p$ dividing $N$ by the Chinese remainder theorem. Then the action
of $\beta_Q$ on $\mathcal{F}_N$ is understood as that of $\beta$
which is an element of
$\mathrm{GL}_2(\mathbb{Z}/N\mathbb{Z})/\{\pm1_2\} $
($\simeq\mathrm{Gal}(\mathcal{F}_N/\mathcal{F}_1)$). Furthermore, we
have an isomorphism
\begin{eqnarray*}
\mathrm{C}(d_K)&\longrightarrow&\mathrm{Gal}(H/K)\\
Q&\mapsto&(h(\theta)\mapsto h^{\beta_Q}(\theta_Q))|_H,
\end{eqnarray*}
where $h\in\mathcal{F}_N$ is defined and finite at $\theta$.
\end{proposition}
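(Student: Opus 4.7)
The plan is to identify the explicit matrix $\beta_Q$ with the image of a carefully chosen idèle under the adelic form of Shimura's reciprocity law. First I would set up the form--ideal dictionary: sending $Q = aX^2+bXY+cY^2$ to $\mathfrak{a}_Q = a\mathbb{Z} + a\theta_Q\mathbb{Z}$ gives an isomorphism $\mathrm{C}(d_K) \xrightarrow{\sim} \mathrm{Cl}(\mathcal{O}_K)$, which composed with the Artin reciprocity map yields the isomorphism $\mathrm{C}(d_K) \simeq \mathrm{Gal}(H/K)$ appearing in the statement; write $\sigma_Q$ for the image of $Q$.

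Next I would invoke Shimura's reciprocity law in its adelic form. Fixing the $\mathbb{Z}$-basis $\{\theta,1\}$ of $\mathcal{O}_K$, multiplication in $K$ embeds $K_{\mathbb{A},f}^*$ into $\mathrm{GL}_2(\mathbb{A}_{\mathbb{Q},f})$ via a map $q$, and for any idèle $s$ whose Artin symbol equals $\sigma_Q$ and any $h \in \mathcal{F}_N$ defined at $\theta$ one has
\[
h(\theta)^{\sigma_Q} = h^{q(s)^{-1}}(\theta).
\]
The point of introducing the CM-point $\theta_Q$ and the matrix $\beta_Q$ is to rewrite this identity so that the modular function is evaluated at $\theta_Q$ instead of $\theta$. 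I would do so by choosing an auxiliary $\alpha_Q \in \mathrm{GL}_2^+(\mathbb{Q})$ with $\alpha_Q\cdot\theta_Q = \theta$ and setting $\beta_Q = q(s)^{-1}\alpha_Q$, so that $h^{q(s)^{-1}}(\theta) = h^{\beta_Q}(\theta_Q)$; strong approximation lets me then replace $s$ by a representative having all the required local matrices lie in $\mathrm{GL}_2(\mathbb{Z}_p)$.

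Third, I would carry out the case-by-case computation that matches $\beta_Q$ with the prescription (\ref{u1})--(\ref{u2}). At primes $p \nmid a$ the local generator $a$ of $\mathfrak{a}_{Q,p}$ works and yields the top row of each table. At primes $p \mid a$, $p \nmid c$ one must switch to the generator $a\theta_Q = (-b+\sqrt{d_K})/2$, whose norm is a $p$-adic unit multiple of $c$. At primes $p \mid a$, $p \mid c$, primitivity of $Q$ forces $p \nmid b$, and one has to combine $a$ and $a\theta_Q$ linearly so that the resulting local matrix lands in $\mathrm{GL}_2(\mathbb{Z}_p)$. The split between the two tables (\ref{u1}) and (\ref{u2}) is merely the substitution $\sqrt{d_K} = 2\theta$ versus $\sqrt{d_K} = 2\theta+1$; in either case the matrix entries are read off from the basis expansion of the chosen local generator against $\{\theta,1\}$.

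The main obstacle I anticipate is this third subcase: both natural generators $a$ and $a\theta_Q$ fail to be local units, so one must find a linear combination that simultaneously generates $\mathfrak{a}_{Q,p}$ and has unit determinant when expanded in the basis $\{\theta,1\}$. Once the three local matrices are checked to coincide with (\ref{u1}) or (\ref{u2}) modulo $N\mathbb{Z}_p$ at every $p\mid N$, the claim $h^{\beta_Q}(\theta_Q) \in K_{(N)}$ follows from Shimura's main theorem of complex multiplication, and restriction to $H$ combined with the form--ideal dictionary of the first paragraph yields the stated isomorphism.
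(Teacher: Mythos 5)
The paper gives no proof of this proposition at all --- it simply cites Gee and Stevenhagen --- and your outline is precisely the argument carried out in those references: the form--ideal dictionary $Q\mapsto\mathfrak{a}_Q=[a,a\theta_Q]$, the adelic form of Shimura's reciprocity law with the embedding determined by the basis $\{\theta,1\}$, and the prime-by-prime choice of local generators (using $p\nmid b$, from primitivity, in the case $p\mid a$, $p\mid c$) which produces the three rows of (\ref{u1}) and (\ref{u2}). The only slip is minor and harmless: in the case $p\mid a$, $p\nmid c$ the norm of $a\theta_Q$ is $ac$, which is a $p$-adic unit multiple of $a$ (the norm of $\mathfrak{a}_Q$), not of $c$.
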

\begin{proof}
See \cite{Gee} or \cite{Stevenhagen}.
\end{proof}

\begin{proposition}\label{Gal(K_{(N)}/K)}
Let $\min(\theta,\mathbb{Q})=X^2+BX+C\in\mathbb{Z}[X]$. For each
positive integer $N$, the matrix group
\begin{equation*}W_{N,\theta}=\bigg\{\begin{pmatrix}t-B s &
-C
s\\s&t\end{pmatrix}\in\mathrm{GL}_2(\mathbb{Z}/N\mathbb{Z})~:~t,s\in\mathbb{Z}/N\mathbb{Z}\bigg\}
\end{equation*}
gives rise to a surjection
\begin{eqnarray*}
W_{N,\theta}&\longrightarrow&\mathrm{Gal}(K_{(N)}/H)\\
\alpha&\mapsto&(h(\theta)\mapsto h^\alpha(\theta)),
\end{eqnarray*}
where $h\in\mathcal{F}_N$ is defined and finite at $\theta$. The
action of $\alpha$ on $\mathcal{F}_N$ is the action as an element of
$\mathrm{GL}_2(\mathbb{Z}/N\mathbb{Z})/\{\pm1_2\}$
($\simeq\mathrm{Gal}(\mathcal{F}_N/\mathcal{F}_1)$). If
$K\neq\mathbb{Q}(\sqrt{-1})$, $\mathbb{Q}(\sqrt{-3})$, then the
kernel is $\{\pm1_2\}$.
\end{proposition}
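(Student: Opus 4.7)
The plan is to realize $W_{N,\theta}$ as the image of $(\mathcal{O}_K/N\mathcal{O}_K)^{\times}$ inside $\mathrm{GL}_2(\mathbb{Z}/N\mathbb{Z})$ under the regular representation, and then to read the statement off from Shimura's reciprocity law restricted to principal integral ideles, exactly as in the Gee-Stevenhagen treatment behind Proposition \ref{Gal(H/K)}.

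First I would verify the matrix identification. For $\alpha=t+s\theta\in\mathcal{O}_K/N\mathcal{O}_K$ the relation $\theta^{2}=-B\theta-C$ gives $\alpha\theta=(t-Bs)\theta-Cs$ and $\alpha\cdot 1=s\theta+t$, so
\begin{equation*}
\begin{pmatrix}t-Bs&-Cs\\s&t\end{pmatrix}\begin{pmatrix}\theta\\1\end{pmatrix}=\begin{pmatrix}\alpha\theta\\ \alpha\end{pmatrix}.
\end{equation*}
In other words the matrices defining $W_{N,\theta}$ are exactly the operators of multiplication by $\alpha$ on $\mathcal{O}_K/N\mathcal{O}_K$ written in the ordered basis $\{\theta,1\}$; this representation is faithful, and the displayed matrix is invertible modulo $N$ precisely when $\alpha\in(\mathcal{O}_K/N\mathcal{O}_K)^{\times}$. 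Hence $W_{N,\theta}$ is the image of $(\mathcal{O}_K/N\mathcal{O}_K)^{\times}$ in $\mathrm{GL}_2(\mathbb{Z}/N\mathbb{Z})$.

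Next I would import the adelic form of Shimura's reciprocity underpinning Proposition \ref{Gal(H/K)}: for a principal integral idele $u\in\widehat{\mathcal{O}}_K^{\times}\subset K_{\mathbb{A}}^{\times}$ the Artin automorphism $[u,K]$ acts on $h(\theta)$ with $h\in\mathcal{F}_N$ finite at $\theta$ by $h^{g_{\theta}(u)}(\theta)$, where $g_{\theta}(u)\in\prod_{p}\mathrm{GL}_2(\mathbb{Z}_p)$ encodes multiplication by $u$ on $\widehat{\mathcal{O}}_K$ in the basis $\{\theta,1\}$. Choosing $u$ locally equal to a lift of $\alpha$ at primes dividing $N$ and to $1$ elsewhere, the right-hand side depends only on the image $\alpha\in W_{N,\theta}$, so the prescription $\alpha\mapsto(h(\theta)\mapsto h^{\alpha}(\theta))$ is a well-defined homomorphism into $\mathrm{Gal}(K_{(N)}/K)$. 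Since $u$ is integral and principal, $[u,K]$ fixes the Hilbert class field $H$, so its image actually lies in $\mathrm{Gal}(K_{(N)}/H)$.

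Finally I would read off surjectivity and the kernel from class field theory. The standard exact sequence
\begin{equation*}
1\to\mathcal{O}_K^{\times}\to(\mathcal{O}_K/N\mathcal{O}_K)^{\times}\to\mathrm{Gal}(K_{(N)}/H)\to 1
\end{equation*}
combined with the surjection $(\mathcal{O}_K/N\mathcal{O}_K)^{\times}\twoheadrightarrow W_{N,\theta}$ yields the required surjectivity. Under the hypothesis $K\neq\mathbb{Q}(\sqrt{-1}),\mathbb{Q}(\sqrt{-3})$ one has $\mathcal{O}_K^{\times}=\{\pm 1\}$, whose image in $W_{N,\theta}$ is exactly $\{\pm 1_2\}$, forcing this to be the full kernel. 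The main obstacle is the compatibility check in the previous paragraph, namely matching the local-at-$N$ matrix reductions with the existing $\mathrm{GL}_2(\mathbb{Z}/N\mathbb{Z})/\{\pm 1_2\}$-action on $\mathcal{F}_N$ so that the entire construction factors through $W_{N,\theta}$; this is precisely the content of the Gee-Stevenhagen framework, which I would cite rather than rederive.
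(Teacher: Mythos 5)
Your proposal is correct and follows exactly the Gee--Stevenhagen argument that the paper itself invokes (its ``proof'' is just the citation): identifying $W_{N,\theta}$ with the multiplication action of $(\mathcal{O}_K/N\mathcal{O}_K)^{\times}$ in the basis $\{\theta,1\}$, applying Shimura reciprocity to unit ideles, and reading off surjectivity and the kernel from the exact sequence for $\mathrm{Gal}(K_{(N)}/H)$ together with $\mathcal{O}_K^{\times}=\{\pm1\}$. No gaps beyond the compatibility check you correctly flag and defer to the cited sources.
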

\begin{proof}
See \cite{Gee} or \cite{Stevenhagen}.
\end{proof}

Combining the above two propositions we achieve the next result.

\begin{proposition}\label{conjugate}
Let $K$ be an imaginary quadratic field other than
$\mathbb{Q}(\sqrt{-1})$ and $\mathbb{Q}(\sqrt{-3})$, and $N$ be a
positive integer. Then we have a bijective map
\begin{eqnarray*}
\begin{array}{cccc}
W_{N,\theta}/\{\pm1_2\}\times\mathrm{C}(d_K)&\longrightarrow&
\mathrm{Gal}(K_{(N)}/K)&\\
\alpha\times Q&\longmapsto&(h(\theta)\mapsto h^{\alpha\cdot
\beta_Q}(\theta_Q)),\end{array}
\end{eqnarray*}
where $h\in\mathcal{F}_N$ is defined and finite at $\theta$.
\end{proposition}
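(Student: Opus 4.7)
The plan is to assemble the bijection from the two isomorphisms established in Propositions \ref{Gal(H/K)} and \ref{Gal(K_{(N)}/K)}, using the tower $K\subseteq H\subseteq K_{(N)}$ and the short exact sequence
\[
1\longrightarrow\mathrm{Gal}(K_{(N)}/H)\longrightarrow\mathrm{Gal}(K_{(N)}/K)\longrightarrow\mathrm{Gal}(H/K)\longrightarrow 1.
\]
First I would interpret the assignment $(\alpha,Q)\mapsto\sigma_{\alpha,Q}$, with $\sigma_{\alpha,Q}(h(\theta))=h^{\alpha\beta_Q}(\theta_Q)$, as a composition of two honest automorphisms of $K_{(N)}/K$. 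Let $\tau_\alpha\in\mathrm{Gal}(K_{(N)}/H)$ denote the image of $\alpha$ under Proposition \ref{Gal(K_{(N)}/K)}, so that $\tau_\alpha(h(\theta))=h^\alpha(\theta)$, and let $\widetilde{\sigma}_Q\in\mathrm{Gal}(K_{(N)}/K)$ be the automorphism $h(\theta)\mapsto h^{\beta_Q}(\theta_Q)$ lifting the image of $Q$ from Proposition \ref{Gal(H/K)}. Because $h^\alpha$ still lies in $\mathcal{F}_N$ when $\alpha$ acts on $\mathcal{F}_N$, one gets $(\widetilde{\sigma}_Q\circ\tau_\alpha)(h(\theta))=(h^\alpha)^{\beta_Q}(\theta_Q)=h^{\alpha\beta_Q}(\theta_Q)$, so $\sigma_{\alpha,Q}=\widetilde{\sigma}_Q\circ\tau_\alpha$ genuinely lies in $\mathrm{Gal}(K_{(N)}/K)$.

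Second, I would record the cardinality identity
\[
\#(W_{N,\theta}/\{\pm1_2\})\cdot\#\mathrm{C}(d_K)=\#\mathrm{Gal}(K_{(N)}/H)\cdot\#\mathrm{Gal}(H/K)=\#\mathrm{Gal}(K_{(N)}/K),
\]
which follows from Propositions \ref{Gal(H/K)} and \ref{Gal(K_{(N)}/K)} together with the exact sequence above; this reduces the statement to injectivity. Suppose $\widetilde{\sigma}_{Q_1}\circ\tau_{\alpha_1}=\widetilde{\sigma}_{Q_2}\circ\tau_{\alpha_2}$. Restricting to $H$ and using that each $\tau_{\alpha_i}$ fixes $H$ by Proposition \ref{Gal(K_{(N)}/K)}, I would conclude $\widetilde{\sigma}_{Q_1}|_H=\widetilde{\sigma}_{Q_2}|_H$, and the injectivity of the map in Proposition \ref{Gal(H/K)} forces $Q_1=Q_2$. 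Cancelling $\widetilde{\sigma}_{Q_1}$ then yields $\tau_{\alpha_1}=\tau_{\alpha_2}$, and Proposition \ref{Gal(K_{(N)}/K)} gives $\alpha_1\equiv\alpha_2\pmod{\{\pm1_2\}}$.

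The subtle step is to ensure that the formula $h(\theta)\mapsto h^{\beta_Q}(\theta_Q)$ indeed extends to a single automorphism $\widetilde{\sigma}_Q$ of all of $K_{(N)}$, and not merely determines an element of $\mathrm{Gal}(H/K)$ via its restriction. This is precisely the content of Shimura's reciprocity law as worked out in \cite{Gee} and \cite{Stevenhagen}; once it is invoked, Propositions \ref{Gal(H/K)} and \ref{Gal(K_{(N)}/K)} slot together into the desired bijection without further calculation.
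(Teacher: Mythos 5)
Your proof is correct, and it supplies exactly the argument that the paper itself omits: the paper's ``proof'' of Proposition \ref{conjugate} is only a citation to \cite{J-K-S} Theorem 3.4, and the standard derivation there is precisely your assembly of Propositions \ref{Gal(H/K)} and \ref{Gal(K_{(N)}/K)} along the tower $K\subseteq H\subseteq K_{(N)}$ (injectivity by restriction to $H$ plus a cardinality count, with Shimura reciprocity guaranteeing that $h(\theta)\mapsto h^{\beta_Q}(\theta_Q)$ is a genuine element of $\mathrm{Gal}(K_{(N)}/K)$). You also correctly isolate where the hypothesis $K\neq\mathbb{Q}(\sqrt{-1}),\mathbb{Q}(\sqrt{-3})$ enters, namely through the kernel $\{\pm1_2\}$ in Proposition \ref{Gal(K_{(N)}/K)}.
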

\begin{proof}
See \cite{J-K-S} Theorem 3.4.
\end{proof}

Proposition \ref{conjugate} and the following transformation formula
of Siegel functions enable us to find all conjugates of the singular
value $g_{(0,1/N)}(\theta)^{-12N/\gcd(6,N)}$, which will be used to
prove our main theorem.

\begin{proposition}\label{F_N}
Let $N\geq 2$. For $(v,w)\in\mathbb{Z}^2-N\mathbb{Z}^2$, the
function $g_{(v/N,w/N)}(\tau)^{-12N/\gcd(6,N)}$ is determined by
$\pm(v/N,w/N)\pmod{\mathbb{Z}^2}$. It belongs to $\mathcal{F}_N$,
and $\alpha\in\mathrm{GL}_2(\mathbb{Z}/N\mathbb{Z})/\{\pm1_2\}$
($\simeq\mathrm{Gal}(\mathcal{F}_N/ \mathcal{F}_1)$) acts on the
function by
\begin{equation*}
(g_{(v/N,w/N)}(\tau)^{-12N/\gcd(6,N)})^\alpha
=g_{(v/N,w/N)\alpha}(\tau)^{-12N/\gcd(6,N)}.
\end{equation*}
\end{proposition}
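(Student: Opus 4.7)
My plan is to derive this transformation formula from the standard theory of Siegel functions in Kubert--Lang \cite{K-L}. Set $M = 12N/\gcd(6,N)$, which equals $\mathrm{lcm}(12, 2N)$. The key observation is that this particular exponent is chosen precisely to annihilate every root of unity that appears in the functional equations of $g_r(\tau)$ for $r \in (1/N)\mathbb{Z}^2$.

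For the first assertion I would invoke two identities from \cite{K-L}: the antisymmetry $g_{-r}(\tau) = -g_r(\tau)$ (readable directly from the infinite product, after the change of variables $q_z \mapsto q_z^{-1}$), and the translation rule
\begin{equation*}
g_{r+s}(\tau) = (-1)^{s_1 s_2 + s_1 + s_2} e^{\pi i (s_2 r_1 - s_1 r_2)} g_r(\tau) \qquad (s=(s_1,s_2)\in\mathbb{Z}^2).
\end{equation*}
For $r \in (1/N)\mathbb{Z}^2$ the exponential factor is a $2N$-th root of unity, and since $M$ is a positive even multiple of $2N$, raising to the $-M$-th power eliminates both the sign and the root of unity. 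This gives the invariance under $r \mapsto -r$ and under translation by $\mathbb{Z}^2$.

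For membership in $\mathcal{F}_N$ I would use the $\mathrm{SL}_2(\mathbb{Z})$-transformation formula
\begin{equation*}
g_r(\gamma \tau) = \eta(\gamma, r) \, g_{r\gamma}(\tau), \qquad \gamma \in \mathrm{SL}_2(\mathbb{Z}),
\end{equation*}
where $\eta(\gamma, r)$ is a $12$-th root of unity. Since $12 \mid M$, this shows (combined with the translation rule above to absorb $r\gamma - r \in \mathbb{Z}^2$ for $\gamma \in \Gamma(N)$) that $g_r^{-M}$ is modular for $\Gamma(N)$; the infinite-product expansion shows its $q$-coefficients lie in $\mathbb{Q}(\zeta_N)$, so the function belongs to $\mathcal{F}_N$.

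For the action formula I would factor $\alpha \in \mathrm{GL}_2(\mathbb{Z}/N\mathbb{Z})$ as a product of (a lift of) an $\mathrm{SL}_2$-matrix and a diagonal matrix $\mathrm{diag}(1,d)$. The former acts by substitution in $\tau$, which is handled by the formula displayed above. The latter acts on $\mathcal{F}_N$ as the cyclotomic automorphism $\zeta_N \mapsto \zeta_N^d$ on $q$-expansion coefficients, and comparing this Galois action with the infinite product of $g_{(v/N, dw/N)}^{-M}$ yields the claimed identity. I expect the main obstacle to be the consistent bookkeeping of the roots of unity in combining these two different types of actions; rather than redoing these delicate computations, I would cite the corresponding lemmas of \cite{K-L} directly.
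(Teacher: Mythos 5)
Your proposal is correct and is essentially the argument behind the paper's proof, which consists solely of a citation to \cite{K-S} (Proposition 2.4 and Theorem 2.5); those results are themselves obtained from the Kubert--Lang transformation formulas exactly as you outline, with the exponent $12N/\gcd(6,N)=\mathrm{lcm}(12,2N)$ chosen to kill the sign in $g_{-r}=-g_r$, the $2N$-th root of unity in the translation rule, and the $12$-th root of unity in the $\mathrm{SL}_2(\mathbb{Z})$-transformation, and with the $\mathrm{GL}_2$-action handled by the standard $\mathrm{SL}_2\times\mathrm{diag}(1,d)$ decomposition. Your closing remark that the root-of-unity bookkeeping should be delegated to the lemmas of \cite{K-L} puts you in the same position as the authors, who delegate it to \cite{K-S}.
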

\begin{proof}
See \cite{K-S} Proposition 2.4 and Theorem 2.5.
\end{proof}

\section{Normal bases of ray class fields}

The following lemma is a slight modification of a result in
\cite{J-K-S}.

\begin{lemma}\label{newlemma}
Let $d_K$ ($\leq-7$) be the discriminant of an imaginary quadratic
field and $\theta$ be as in (\ref{theta}). Let $Q=aX^2+bXY+cY^2$ be
a reduced quadratic form in $\mathrm{C}(d_K)$ and $\theta_Q$ be as
in (\ref{theta_Q}). For each integer $N$ ($\geq2$) we have the
inequalities:
\begin{itemize}
\item[(i)] If $a\geq2$, then
$|g_{(s/N,t/N)}(\theta_Q)^{-1}/ g_{(0,1/N)}(\theta)^{-1}|< 1$
 for all $(s,t)\in\mathbb{Z}^2-N\mathbb{Z}^2$.
\item[(ii)] If $a=1$, then
$|g_{(s/N,t/N)}(\theta_Q)^{-1}/ g_{(0,1/N)}(\theta)^{-1}|< 1$ for
all $s,t\in\mathbb{Z}$ with $s\not\equiv0\pmod{N}$.
\item[(iii)] If $a=1$, then
$|g_{(0,t/N)}(\theta_Q)^{-1}/g_{(0,1/N)}(\theta)^{-1}|<1$ for all
$t\in\mathbb{Z}$ with $t\not\equiv0,\pm1\pmod{N}$.
\end{itemize}
\end{lemma}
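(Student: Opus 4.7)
The plan is to take absolute values in the product formula for $g_{(r_1, r_2)}(\tau)$: the prefactor $-q_\tau^{(1/2)\mathbf{B}_2(r_1)}e^{\pi i r_2(r_1-1)}$ has absolute value $e^{-\pi\mathrm{Im}(\tau)\mathbf{B}_2(r_1)}$, so with $z = r_1\tau + r_2$ one finds
$$\log|g_{(r_1,r_2)}(\tau)| = -\pi\,\mathrm{Im}(\tau)\mathbf{B}_2(r_1) + \log|1 - q_z| + \sum_{n\geq 1}\bigl(\log|1-q_\tau^n q_z| + \log|1-q_\tau^n q_z^{-1}|\bigr).$$
Since $g_{(r_1, r_2)}(\tau)^{-12N/\gcd(6,N)}$ depends only on $\pm(r_1, r_2)\pmod{\mathbb{Z}^2}$ by Proposition \ref{F_N}, I would first reduce $(s/N, t/N)$ to a representative with $s/N \in [0, 1/2]$ and $t/N \in [0, 1)$, and then split into the three cases of the statement.

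For part (i), with $a \geq 2$ we have $\mathrm{Im}(\theta_Q) = \sqrt{|d_K|}/(2a) \leq \sqrt{|d_K|}/4$ while $\mathrm{Im}(\theta) = \sqrt{|d_K|}/2$, and $\mathbf{B}_2(s/N) \leq 1/6 = \mathbf{B}_2(0)$. Hence the dominant term of $\log|g_{(s/N,t/N)}(\theta_Q)|$ exceeds that of $\log|g_{(0,1/N)}(\theta)|$ by at least $\pi\sqrt{|d_K|}/24$, yielding an exponential factor of size $e^{\pi\sqrt{|d_K|}/24}$ in favour of the numerator. Because $|q_{\theta_Q}| \leq e^{-\pi\sqrt{|d_K|}/a} \leq e^{-\pi\sqrt{7}/2}$, each factor $|1 - q_\tau^n q_z^{\pm 1}|$ equals $1 + O(|q_{\theta_Q}|^n)$, so the tails contribute only a bounded correction that, for $d_K \leq -7$, cannot overturn the exponential gap. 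Part (ii) follows the same recipe with a thinner margin: $\theta_Q = \theta$, so the leading gap is $\pi\,\mathrm{Im}(\theta)(1/6 - \mathbf{B}_2(s/N))$, smallest when $s \equiv \pm 1\pmod N$ and equal there to $\pi\sqrt{|d_K|}(N-1)/(2N^2) > 0$; an explicit bound on $\log|1 - q_z|$ via $|q_z| = e^{-\pi s\sqrt{|d_K|}/N}$ shows this gap still survives once $d_K \leq -7$.

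Part (iii) is the cleanest case: $\theta_Q = \theta$ and $r_1 = 0$, so both the leading term and the infinite-product tails match on the two sides, and the inequality reduces to $|1 - e^{2\pi i t/N}| > |1 - e^{2\pi i/N}|$, equivalently $\sin(\pi t/N) > \sin(\pi/N)$, which is immediate from concavity of $\sin$ on $[0, \pi]$ since the hypothesis forces $t/N \in (1/N, 1 - 1/N)$. The main technical obstacle is the quantitative error control in cases (i) and (ii): one must confirm that the bounded contribution of the infinite products does not consume the leading-order gap. Since the lemma is only a slight modification of a result already in \cite{J-K-S}, I expect to import those estimates almost verbatim, merely verifying that they continue to apply to the slightly broader range of $(s, t)$ permitted here.
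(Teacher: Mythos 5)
Your approach is essentially the paper's: the paper proves this lemma simply by citing \cite{J-K-S} Lemmas 4.2--4.3, and those estimates are obtained exactly as you describe, by comparing the product expansion of $|g_{(r_1,r_2)}(\tau)|$ term by term (the $q_\tau^{(1/2)\mathbf{B}_2(r_1)}$ prefactor, the $|1-q_z|$ factor, and the tail) after reducing $(s/N,t/N)$ modulo $\pm$ and $\mathbb{Z}^2$; your leading-order gaps $\pi\sqrt{|d_K|}/24$ in (i) and $\pi\sqrt{|d_K|}(N-1)/(2N^2)$ in (ii) are the correct ones. One small imprecision in your part (iii): the infinite-product tails do \emph{not} literally match on the two sides, since the numerator involves $q_\theta^n e^{\pm 2\pi i t/N}$ and the denominator $q_\theta^n e^{\pm 2\pi i/N}$; what is true is that each tail lies between $\prod_{n\geq1}(1-|q_\theta|^n)^2$ and $\prod_{n\geq1}(1+|q_\theta|^n)^2$ with $|q_\theta|\leq e^{-\pi\sqrt{7}}$, so both are within roughly $10^{-3}$ of $1$, while the ratio $\sin(\pi t/N)/\sin(\pi/N)\geq 2\cos(\pi/N)\geq\sqrt{2}$ for the admissible $t$; so the inequality does not ``reduce to'' the sine comparison but is easily salvaged by it. With that caveat, and granting the quantitative bookkeeping you defer to \cite{J-K-S}, the proposal is sound.
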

\begin{proof}
See \cite{J-K-S} Lemmas 4.2$\sim$4.3.
\end{proof}

Now we are ready to prove our main theorem concerning a normal basis
of $K_{(N)}$ over $K$.

\begin{theorem}\label{main}
Let $K$ be an imaginary quadratic field other than
$\mathbb{Q}(\sqrt{-1})$ and $\mathbb{Q}(\sqrt{-3})$, and $\theta$ be
as in (\ref{theta}). Let $N$ ($\geq2$) be an integer. Then, for a
suitably large integer $m$, the conjugates of the singular value
\begin{equation*}
g_{(0,1/N)}(\theta)^{-12Nm/\gcd(6,N)}
\end{equation*}
form a normal basis of $K_{(N)}$ over $K$.
\end{theorem}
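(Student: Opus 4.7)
The plan is to apply the criterion of Theorem \ref{criterion} to the element $x = g_{(0,1/N)}(\theta)^{-12N/\gcd(6,N)}$; once I verify $|x^\gamma/x| < 1$ for every non-identity $\gamma \in \mathrm{Gal}(K_{(N)}/K)$, the conclusion follows verbatim from Theorem \ref{criterion}. The two key tools are Proposition \ref{conjugate}, which parametrizes the Galois group bijectively by pairs $(\alpha, Q) \in W_{N,\theta}/\{\pm 1_2\} \times \mathrm{C}(d_K)$, and Proposition \ref{F_N}, which converts the Galois action on Siegel units into the row-vector transformation $(0, 1/N) \mapsto (0, 1/N) \cdot \alpha \beta_Q$. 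Note that the hypothesis $K \neq \mathbb{Q}(\sqrt{-1}), \mathbb{Q}(\sqrt{-3})$ ensures $d_K \leq -7$, so Lemma \ref{newlemma} is applicable.

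First I would fix a non-identity pair $(\alpha, Q)$ and write the corresponding conjugate as $g_{(s/N,\, t/N)}(\theta_Q)^{-12N/\gcd(6,N)}$, where $(s,t) \in \mathbb{Z}^2$ represents $(0, 1/N)\cdot \alpha \beta_Q$ modulo $\mathbb{Z}^2$. I then distinguish cases by the leading coefficient $a$ of the reduced form $Q$. If $a \geq 2$, case (i) of Lemma \ref{newlemma} immediately gives $|x^{(\alpha, Q)}/x| < 1$. If $a = 1$ (so $Q$ is the principal form), case (ii) handles $s \not\equiv 0 \pmod{N}$, while case (iii) handles the subcase $s \equiv 0 \pmod{N}$ together with $t \not\equiv 0, \pm 1 \pmod{N}$.

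The main obstacle, and the only remaining case, is to rule out that a non-identity Galois element arises with $Q$ principal, $s \equiv 0 \pmod{N}$, and $t \equiv \pm 1 \pmod{N}$; in that event Proposition \ref{F_N} would give $x^{(\alpha, Q)} = x$ and the strict inequality would fail. To handle it I would compute $\beta_Q$ explicitly for the principal form from (\ref{u1})--(\ref{u2}): in both residue classes of $d_K$ one has $a=1$, so the first branch applies at every prime $p$ and the resulting matrix is congruent to $1_2$ modulo $N$. Consequently the action on $(0, 1/N)$ is governed by $\alpha$ alone; writing $\alpha = \begin{pmatrix} t - Bs & -Cs \\ s & t \end{pmatrix} \in W_{N,\theta}$, the row product yields $(s/N, t/N)$, and the assumed congruences on $s, t$ force $\alpha \equiv \pm 1_2 \pmod{N}$. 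Thus $\alpha$ is trivial in $W_{N,\theta}/\{\pm 1_2\}$ and, by the bijectivity in Proposition \ref{conjugate}, $(\alpha, Q)$ represents the identity of $\mathrm{Gal}(K_{(N)}/K)$, contrary to assumption.

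Having verified $|x^\gamma/x| < 1$ for every non-identity $\gamma$, Theorem \ref{criterion} supplies an integer $m$ such that the conjugates of $x^m = g_{(0,1/N)}(\theta)^{-12Nm/\gcd(6,N)}$ form a normal basis of $K_{(N)}$ over $K$, which is exactly the statement of Theorem \ref{main}.
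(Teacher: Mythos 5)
Your proposal is correct and follows essentially the same route as the paper: reduce to Theorem \ref{criterion}, parametrize the conjugates via Propositions \ref{conjugate} and \ref{F_N}, and split into the cases of Lemma \ref{newlemma} according to the leading coefficient $a$ and the residue of $(s,t)$. Your only departure is that you spell out why the leftover case $a=1$, $(s,t)\equiv(0,\pm1)\pmod{N}$ forces $\alpha\equiv\pm 1_2$ and hence $\gamma=\mathrm{id}$, a point the paper simply asserts.
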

\begin{proof}
For simplicity we set $x=g_{(0,1/N)}(\theta)^{-12N/\gcd(6,N)}$,
which belongs to $K_{(N)}$ by Proposition \ref{F_N} and the main
theorem of complex multiplication (\cite{Lang} or \cite{Shimura}).
It suffices to show by Theorem \ref{criterion} that
\begin{equation}\label{tobeshown}
|x^\gamma/x|<1\quad\textrm{for all
$\gamma\in\mathrm{Gal}(K_{(N)}/K),~\gamma\neq\textrm{id}$}.
\end{equation}
To this end we consider by Propositions \ref{conjugate} and
\ref{F_N} a conjugate $x^\gamma$ of $x$ which is of the form
\begin{equation*}
x^\gamma=(g_{(0,1/N)}(\tau)^{-12N/\gcd(6,N)})^{\alpha\cdot
\beta_Q}(\theta_Q)=g_{(s/N,t/N)\beta_Q}(\theta_Q)^{-12N/\gcd(6,N)}
\end{equation*}
for some
$\alpha=\pm\left(\begin{smallmatrix}t-Bs&-Cs\\s&t\end{smallmatrix}\right)\in
W_{N,\theta}/\{\pm1_2\}$, where $\min(\theta,\mathbb{Q})=X^2+BX+C$,
and $Q=aX^2+bXY+cY^2$ is a reduced quadratic form in
$\mathrm{C}(d_K)$. If $a\geq2$, then the inequality
(\ref{tobeshown}) holds by Lemma \ref{newlemma}(i). If $a=1$, then
we derive from the conditions (\ref{reduced}) and (\ref{disc}) for a
reduced quadratic form that
\begin{eqnarray*}
Q=\left\{\begin{array}{ll} X^2-(d_K/4)Y^2 &
\textrm{for}~d_K\equiv0\pmod{4}\vspace{0.1cm}\\
X^2+XY+((1-d_K)/4)Y^2 & \textrm{for}~
d_K\equiv1\pmod{4},\end{array}\right.
\end{eqnarray*}
which yields $\beta_Q\equiv1_2\pmod{N}$ by the definitions
(\ref{u1}) and (\ref{u2}). Thus
$x^\gamma=g_{(s/N,t/N)}(\theta_Q)^{-12N/\gcd(6,N)}$. Moreover, if
$(s,t)\not\equiv(0,\pm1)\pmod{N}$, then the inequality
(\ref{tobeshown}) is also true by Lemma \ref{newlemma}(ii) and
(iii). It is not necessary to consider the remaining case when $a=1$
and $(s,t)\equiv(0,\pm1)\pmod{N}$, because
$\gamma=\mathrm{id}\in\mathrm{Gal}(K_{(N)}/K)$ in this case.
Therefore (\ref{tobeshown}) holds true for all
$\gamma\in\mathrm{Gal}(K_{(N)}/K)$, $\gamma\neq\mathrm{id}$, as
desired. This completes the proof.
\end{proof}

\begin{remark}\label{S-Rinvariant}
Let $K$ be an imaginary quadratic field and $\mathfrak{f}$ be a
nontrivial integral ideal of $K$. We denote by
$\mathrm{Cl}(\mathfrak{f})$ the ray class group modulo
$\mathfrak{f}$ and write $C_0$ for its unit class. By definition the
ray class field $K_\mathfrak{f}$ modulo $\mathfrak{f}$ is a finite
abelian extension of $K$ whose Galois group is isomorphic to
$\mathrm{Cl}(\mathfrak{f})$ via the Artin map. For
$C\in\mathrm{Cl}(\mathfrak{f})$ we take an integral ideal
$\mathfrak{c}$ in $C$ so that
$\mathfrak{f}\mathfrak{c}^{-1}=[z_1,z_2]$ with
$z=z_1/z_2\in\mathfrak{H}$. We define the \textit{Siegel-Ramachandra
invariant} $g_\mathfrak{f}(C)$ by
\begin{equation*}
g_\mathfrak{f}(C)=g_{(a/N,b/N)}(z)^{12N},
\end{equation*}
where $N$ is the smallest positive integer in $\mathfrak{f}$ and
$a$, $b$ are integers such that $1=(a/N)z_1+(b/N)z_2$. This value
depends only on the class $C$ and belongs to $K_\mathfrak{f}$
(\cite{K-L}).
\par
Ramachandra showed in \cite{Ramachandra} that $K_\mathfrak{f}$ can
be generated over $K$ by certain elliptic unit, but his invariant
involves overly complicated product of high powers of singular
values of the Klein forms and singular values of the
$\Delta$-function for practical use. Thus, Lang proposed in his book
(\cite{Lang} p.292) to find a simpler one by utilizing
Siegel-Ramachandra invariants, and Schertz (\cite{Schertz}) showed
that $g_\mathfrak{f}(C_0)$ (so any $g_\mathfrak{f}(C)$) generates
$K_\mathfrak{f}$ over $K$ under some conditions on $\mathfrak{f}$.
He further conjectured that $g_\mathfrak{f}(C_0)$ is always a ray
class invariant.
\par
On the other hand, the modulus $\mathfrak{f}$ can be enlarged by
class field theory so that we may assume
$\mathfrak{f}=N\mathcal{O}_K$ for an integer $N$ ($\geq2$). By
definition we see that
$g_\mathfrak{f}(C_0)=g_{(0,1/N)}(\theta)^{12N}$. If
$K\neq\mathbb{Q}(\sqrt{-1}),~\mathbb{Q}(\sqrt{-3})$, then Theorem
\ref{main} indicates that $g_\mathfrak{f}(C_0)$ is indeed a
primitive generator of $K_\mathfrak{f}$ over $K$, which would be an
answer for the Lang-Schertz conjecture.
\end{remark}

\begin{example}
Let $K=\mathbb{Q}(\sqrt{-5})$ and $N=6$, then $d_K=-20$ and
$\theta=\sqrt{-5}$. We get
\begin{eqnarray*}
&&\mathrm{C}(d_K)=\{Q_1=X^2+5Y^2,Q_2=2X^2+2XY+3Y^2\}\\
&&\theta_{Q_1}=\sqrt{-5},~\theta_{Q_2}=(-1+\sqrt{-5})/2,~
\beta_{Q_1}=\left(\begin{smallmatrix}
1&0\\0&1\end{smallmatrix}\right),~
\beta_{Q_2}=\left(\begin{smallmatrix}1&5\\3&2\end{smallmatrix}\right)\\
&&W_{N,\theta}/\{\pm1_2\}=
\{\left(\begin{smallmatrix}1&0\\0&1\end{smallmatrix}\right),
\left(\begin{smallmatrix}0&1\\1&0\end{smallmatrix}\right),
\left(\begin{smallmatrix}2&3\\3&2\end{smallmatrix}\right),
\left(\begin{smallmatrix}3&2\\2&3\end{smallmatrix}\right)\}.
\end{eqnarray*}
By Propositions \ref{conjugate} and \ref{F_N}, the conjugates of
$x=g_{(0,1/6)}(\sqrt{-5})^{-12}$ are
\begin{eqnarray*}
\begin{array}{ll}
x_1=x, & x_2=g_{(1/6,0)}(\sqrt{-5})^{-12}\\
x_3=g_{(3/6,2/6)}(\sqrt{-5})^{-12}, &
x_4=g_{(2/6,3/6)}(\sqrt{-5})^{-12}\\
x_5=g_{(3/6,2/6)}((-1+\sqrt{-5})/2)^{-12}, &
x_6=g_{(1/6,5/6)}((-1+\sqrt{-5})/2)^{-12}\\
x_7=g_{(3/6,1/6)}((-1+\sqrt{-5})/2)^{-12}, &
x_8=g_{(5/6,4/6)}((-1+\sqrt{-5})/2)^{-12}
\end{array}
\end{eqnarray*}
possibly with multiplicity. By using a computer program (such as
MAPLE) one can find that
\begin{eqnarray*}
|x_i/x_1|<10^{-4}<1/\#\mathrm{Gal}(K_{(6)}/K)=1/8\quad\textrm{for}~i=2,\cdots,8.
\end{eqnarray*}
Therefore $\{x_1,\cdots,x_8\}$ becomes a normal basis of $K_{(6)}$
over $K$ by Theorem \ref{criterion}. Moreover, one can show that the
minimal polynomial of $x$ would be
\begin{eqnarray*}
(X-x_1)\cdots(X-x_8)&=&X^8-1263840X^7+42016796X^6+72894400X^5\\
&&+15056640X^4-4525280X^3+167196X^2-1280X+1
\end{eqnarray*}
with integer coefficients (\cite{K-S} $\S$3).
\end{example}

\bibliographystyle{amsplain}

\begin{thebibliography}{10}

\bibitem {Cox} D. A. Cox, \textit{Primes of the form $x^2+ny^2$: Fermat, Class Field, and Complex Multiplication}, John Wiley \& Sons, Inc., 1989.

\bibitem {Gee} A. Gee, \textit{Class invariants by Shimura's reciprocity law}, J. Theor. Nombres Bordeaux
11 (1999), no. 1, 45-72.

\bibitem{J-K-S} H. Y. Jung, J. K. Koo and D. H. Shin,
\textit{Ray class invariants over imaginary quadratic fields},
http://arxiv.org/abs/1007.2317, submitted.

\bibitem {Komatsu} K. Komatsu, \textit{Construction of a normal basis by special values of Siegel modular functions},
Proc. Amer. Math. Soc. 128 (2000), no. 2, 315-323.

\bibitem {K-S} J. K. Koo and D. H. Shin, \textit{On some arithmetic properties of Siegel functions},
Math. Zeit., 264 (2010) 137-177.

\bibitem {K-L} D. Kubert and S. Lang, \textit{Modular Units}, Grundlehren der mathematischen Wissenschaften 244, Spinger-Verlag, 1981.

\bibitem {Lang} S. Lang, \textit{Elliptic Functions, 2nd edition}, Spinger-Verlag, 1987.

\bibitem {Leopoldt} H. W. Leopoldt, \textit{\"{U}ber die Hauptordnung der ganzen Elemente eines abelschen
Zahlk\"{o}rpers}, J. Reine Angew. Math. 201 (1959) 119-149.

\bibitem {Okada1} T. Okada, \textit{On an extension of a theorem of
S. Chowla}, Acta Arith. 38 (1980/81), no. 4, 341-345.

\bibitem {Okada2} T. Okada, \textit{Normal bases of class field over
Gauss' number field}, J. London Math. Soc. (2) 22 (1980), no. 2,
221-225.

\bibitem {Ramachandra} K. Ramachandra, \textit{Some applications of Kronecker's limit formula},
Ann. of Math. (2) 80(1964), 104-148.

\bibitem {Schertz2} R. Schertz, \textit{Galoismodulstruktur und elliptische Funktionen}, J. Number Theory 39 (1991), no. 3,
285-326.

\bibitem{Schertz} R. Schertz, \textit{Construction of ray class fields by elliptic
units}, J. Theor. Nombres Bordeaux 9 (1997), no. 2, 383-394.

\bibitem {Shimura} G. Shimura, \textit{Introduction to the Arithmetic Theory of Automorphic Functions}, Iwanami Shoten and Princeton
University Press, 1971.

\bibitem{Stevenhagen} P. Stevenhagen, \textit{Hilbert's 12th problem, complex multiplication and Shimura
reciprocity}, Class Field Theory-Its Centenary and Prospect (Tokyo,
1998), 161-176, Adv. Stud. Pure Math., 30, Math. Soc. Japan, Tokyo,
2001.

\bibitem {Taylor} M. J. Talyor, \textit{Relative Galois module structure of rings of integers and elliptic functions II},
Ann. of Math. (2) 121 (1985), no. 3, 519-535.

\bibitem {Waerden} B. L. van der Waerden, \textit{Algebra, Vol I},
Springer-Verlag, 2003.

\end{thebibliography}

\end{document}